\def\NAT@def@citea{\def\@citea{\NAT@separator}}
\theoremstyle{plain}
\newtheorem{theorem}{Theorem}[section]
\newtheorem{corollary}[theorem]{Corollary}
\crefname{enumi}{\unskip}{\unskip}
\theoremstyle{definition}
\newtheorem{remark}[theorem]{Remark}
\begin{document}

\title[How to construct an upper triangular matrix]{How to construct a upper triangular matrix that satisfy the quadratic polynomial equation with different roots}

\author{Ivan Gargate}
\address{UTFPR, Campus Pato Branco, Rua Via do Conhecimento km 01, 85503-390 Pato Branco, PR, Brazil}
\email{ivangargate@utfpr.edu.br}

\author{Michael Gargate}
\address{UTFPR, Campus Pato Branco, Rua Via do Conhecimento km 01, 85503-390 Pato Branco, PR, Brazil}
\email{michaelgargate@utfpr.edu.br}

\begin{abstract}
Let $R$ be an associative ring with identity $1$. We describe all matrices in $T_n(R)$ the ring of $n\times n$ upper triangular matrices over $R$ ($n\in \mathbb{N}$), and $T_{\infty}(R)$ the ring of infinite upper triangular matrices over $R$, satisfying the quadratic polynomial equation $x^2-rx+s=0$. For such propose we assume that the above polynomial have two different roots in $R$. Moreover, in the case that $R$ in finite, we compute the number of all matrices to solves the matrix equation $A^2-rA+sI=0,$ where $I$ is the  identity matrix.
\end{abstract}

\keywords{triangular matrix, infinite triangular matrix }

\maketitle

\section{Introduction}\label{intro}
Let $R$ be an associative ring with identity 1. Denote by $T_n(R)$ the $n\times n$ upper triangular groups with entries in $R$ and $T_{\infty}(R)$ the ring of infinite upper triangular matrices over $R$. There are several authors who have works over this spaces, for instance, Slowik \cite{Slowik} show how to construct an involution matrix over these spaces, Hou \cite{Hou} proof the similar results for idempotent matrices and Gargate in \cite{Gargate} compute the number off all involutions over the incidence algebras $\mathcal{I}(X,\mathbb{K})$ where $X$ is a finite poset and $\mathbb{K}$ is a finite field. Recentely Gargate \cite{Gargate2} compute the number of coninvolution matrices over the special rings: the Gaussian Integers module $p$ and the Quartenion Integers module $p$, with $p$ an odd prime number. Remember that various special matrices satisfy some polynomial equations, for instance,  idempotent matrices satisfies $x^2-x=0$ and  involution matrices satisfies $x^2-1=0$. 

In the present article the authors generalizes the results of \cite{Slowik} and \cite{Hou} on a broader class of matrices that satisfy the polynomial equations $x^2-rx+s=0$ with the condition that the polynomial has two different roots in $R$.  We investigate how to construct these special matrices and compute the total of these matrices when $R$ is a finite ring.

Our main results is the followings Theorem:


 


 
\begin{theorem}\label{th1} Assume that $R$ is an associative ring with identity 1. Let $M$ be either the
group $T_n(K)$ or $T_{\infty}(K)$  for some $n \in\mathbb{N}$ and denote by $I$  the identity matrix of $M$. Consider the quadratic polynomial equation $x^2-rx+s=0$ and assume that this equation has two different roots $a,b\in R$ such that $a-b$ is not a right zero divisor. Then a matrix $A\in M$ satisfies the quadratic equation of the type 
\begin{equation}\label{eq1}
A^2-r A+ s I=0,
\end{equation}
    if and only if $A$ is
described by the following statements:
\begin{itemize}
\item[(i)] For all $1\leq i\leq n$, we have  $a_{ii}\in\{a,b\}$ with $a, b$ different roots of the quadratic equation $x^2-rx+s=0$, $r=a+b$ and $s=ab$.
\item[(ii)] For all pairs of indices $1\leq i< j\leq n$ such that $a_{ii} = a_{jj}$, then $a_{ij}$ equals to
\begin{equation}\label{eqth}
a_{ij}=\left\{\begin{array}{lr} 
0\ \ & \text{if}\ \ j=i+1\\
 -\displaystyle\frac{1}{a_{ii}-{\text{another \ root}}}\displaystyle\sum_{p=i+1}^{j-1}a_{ip}a_{pj} 
&\text{if}\  \ j > i +1. 
 \end{array}\right.
\end{equation}
\item[(iii)] For $i < j$, such that $a_{ii} \not = a_{jj}$, then $a_{ij}$ can be chosen arbitrarily.
\end{itemize}

\end{theorem}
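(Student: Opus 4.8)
The plan is to turn the single matrix identity \eqref{eq1} into a system of scalar equations, one for each entry on or above the diagonal, and then to solve that system entry by entry in order of increasing distance from the diagonal. Set $C:=A^2-rA+sI$. Since $A\in M$ is upper triangular, the $(i,j)$ entry of $A^2$ is the \emph{finite} sum $\sum_{p=i}^{j}a_{ip}a_{pj}$ — finite even when $M=T_\infty(R)$, which is exactly why the infinite case will require no separate argument — so $C$ is again upper triangular and
\[
c_{ij}\;=\;\sum_{p=i}^{j}a_{ip}a_{pj}\;-\;r\,a_{ij}\;+\;s\,\delta_{ij},\qquad i\le j .
\]
Thus $A$ satisfies \eqref{eq1} if and only if $c_{ij}=0$ for every pair $i\le j$, and the whole proof is an analysis of these equations.

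On the diagonal, $c_{ii}=a_{ii}^{2}-r\,a_{ii}+s$, so $c_{ii}=0$ says precisely that $a_{ii}$ is a root of $x^{2}-rx+s=0$. Here I would first record the elementary consequences of the hypothesis that $a\neq b$ are roots with $a-b$ not a right zero divisor: subtracting the relations $a^{2}-ra+s=0$ and $b^{2}-rb+s=0$ and cancelling the factor $a-b$ yields $r=a+b$ and $s=ab$, and the same cancellation shows that $a,b$ are the only roots of $x^{2}-rx+s=0$ in $R$. This establishes statement (i).

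For $i<j$ I would split off the two boundary terms $p=i$ and $p=j$, writing
\[
c_{ij}\;=\;a_{ii}a_{ij}+a_{ij}a_{jj}-r\,a_{ij}\;+\;\sum_{p=i+1}^{j-1}a_{ip}a_{pj},
\]
and simplify the boundary part $a_{ii}a_{ij}+a_{ij}a_{jj}-r\,a_{ij}$ using $r=a+b$ together with $a_{ii},a_{jj}\in\{a,b\}$. When $a_{ii}\neq a_{jj}$ this boundary part vanishes, so $c_{ij}=0$ reduces to $\sum_{p=i+1}^{j-1}a_{ip}a_{pj}=0$ and imposes no condition on $a_{ij}$ itself; when $a_{ii}=a_{jj}$ the boundary part simplifies to $(a_{ii}-a')\,a_{ij}$, where $a'$ is the root different from $a_{ii}$, so $c_{ij}=0$ becomes $(a_{ii}-a')\,a_{ij}=-\sum_{p=i+1}^{j-1}a_{ip}a_{pj}$. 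Since $a_{ii}-a'\in\{a-b,\,b-a\}$ is not a zero divisor it may be divided out, which is exactly \eqref{eqth}: for $j=i+1$ the sum is empty and $a_{ij}=0$, and for $j>i+1$ one reads off the stated recursion.

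It then remains to run this over $d:=j-i=1,2,3,\dots$ by induction. Assuming every entry of width less than $d$ already obeys (i)--(iii), one checks: if $a_{ii}=a_{jj}$ then $-\sum_{p=i+1}^{j-1}a_{ip}a_{pj}$ is a well-defined element built only from smaller-width entries, so $a_{ij}$ is uniquely pinned down by \eqref{eqth}, giving (ii); and if $a_{ii}\neq a_{jj}$ one must verify that $\sum_{p=i+1}^{j-1}a_{ip}a_{pj}=0$ holds \emph{automatically}, after which $a_{ij}$ is genuinely free, giving (iii). Conversely, if $A$ has the form prescribed by (i)--(iii), retracing these computations gives $c_{ij}=0$ for all $i\le j$, i.e.\ \eqref{eq1}. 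I expect the real obstacle to be precisely this compatibility check for $a_{ii}\neq a_{jj}$: the individual products $a_{ip}a_{pj}$ need not vanish, and one has to see — by substituting the formulas of (ii), splitting according to whether $a_{pp}$ equals $a_{ii}$ or $a_{jj}$, and using $(a-b)^{-1}=-(b-a)^{-1}$ — that they cancel in pairs, so the sum collapses to $0$. Organising the induction on $d$ so that the interleaving of the "free" entries of (iii) and the "forced" entries of (ii) inside these sums is handled uniformly, while keeping track of left- versus right-cancellation by the non-zero-divisor $a-b$, is the technical heart of the argument.
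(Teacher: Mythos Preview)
Your overall framework is exactly the paper's: expand $C=A^2-rA+sI$ entrywise, read off the diagonal condition, isolate the boundary terms $a_{ii}a_{ij}+a_{ij}a_{jj}-ra_{ij}$, and induct on the width $d=j-i$. You also correctly locate the one genuine obstacle: when $a_{ii}\neq a_{jj}$, the equation $c_{ij}=0$ imposes no constraint on $a_{ij}$ \emph{provided} the inner sum $\sum_{p=i+1}^{j-1}a_{ip}a_{pj}$ vanishes automatically, and this must be verified from the induction hypothesis.

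Where you diverge from the paper is in how you propose to discharge that check. You plan to substitute the recursive formulas of (ii), split the sum according to whether $a_{pp}=a_{ii}$ or $a_{pp}=a_{jj}$, and look for pairwise cancellation via $(a-b)^{-1}=-(b-a)^{-1}$. The paper does something cleaner that sidesteps this combinatorics entirely. It writes the $(m{+}1)\times(m{+}1)$ principal submatrix $A(m,i)$ in $3\times 3$ block form
\[
A(m,i)=\begin{pmatrix} a_{ii} & \alpha & a_{i,i+m}\\ 0 & \beta & \gamma\\ 0 & 0 & a_{i+m,i+m}\end{pmatrix},
\]
and observes that the induction hypothesis applied to the two overlapping blocks $A(m-1,i)$ and $A(m-1,i+1)$ gives the relations $a_{ii}\alpha+\alpha\beta-r\alpha=0$ and $\beta\gamma+\gamma a_{i+m,i+m}-r\gamma=0$. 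Multiplying the first on the right by $\gamma$ and the second on the left by $\alpha$ and comparing yields $(r-a_{ii})\alpha\gamma=(r-a_{i+m,i+m})\alpha\gamma$; when $a_{ii}\neq a_{i+m,i+m}$ this forces $(a-b)\alpha\gamma=0$, hence $\alpha\gamma=\sum_{p=i+1}^{j-1}a_{ip}a_{pj}=0$ by the non-zero-divisor hypothesis. No expansion of the forced entries, no case split on $a_{pp}$, no tracking of free versus forced factors.

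Your direct-cancellation plan is not wrong in spirit, but ``cancel in pairs'' is too optimistic a description once $d\ge 4$: the terms in the sum involve products of both free and forced entries in various patterns, and the cancellation is not termwise but global. If you try to push it through you will end up reconstructing the block identity above in scalar coordinates. I would recommend adopting the paper's device: it packages the whole induction hypothesis into the two matrix equations for $\alpha$ and $\gamma$, and the vanishing of $\alpha\gamma$ drops out in two lines.
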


Next using the above theorem we will prove the following result

\begin{theorem}\label{th2} Let $R$ be an associative ring with identity 1 and $|R|=q$ the number of the elements
in $R$. Consider the quadratic polynomial equation $x^2-rx+s=0$ and assume that this equation has two different roots $a,b\in R$ such that $a-b$ is not a right zero divisor. Then the total number of $ n\times n$ upper triangular matrices that satisfy the quadratic equaton $A^2-rA+sI=0$  is equal to
$$ \displaystyle \sum_{\substack{n_1+n_1=n \\ 0\leq n_i}} \binom{n}{n_1n_2}\cdot q^{n_1n_2}.
$$
where $n_1, n_2$ are the number of times  that appears $a,b$ in the diagonal respectively.
\end{theorem}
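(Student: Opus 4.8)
The plan is to read the count straight off the structural description furnished by Theorem~\ref{th1}. First I would record that Theorem~\ref{th1} sets up a bijection between the solution set of \eqref{eq1} inside $T_n(R)$ and the set of pairs $(D,F)$, where $D=(a_{11},\dots,a_{nn})\in\{a,b\}^n$ is a diagonal satisfying (i) and $F$ is an arbitrary assignment of elements of $R$ to the ``free'' positions, i.e. the pairs $(i,j)$ with $i<j$ and $a_{ii}\neq a_{jj}$. Indeed, given $(D,F)$, item~(iii) fills in the free positions, and item~(ii) then determines every remaining above-diagonal entry via \eqref{eqth}; this recursion is well posed by induction on the width $j-i$, since the right-hand side of \eqref{eqth} for the pair $(i,j)$ only involves entries $a_{ip},a_{pj}$ of strictly smaller width, and the scalar $-(a_{ii}-\text{another root})^{-1}=\pm(a-b)^{-1}$ makes sense because $a-b$, not being a right zero divisor in the finite ring $R$, is a unit. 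Conversely, by the ``only if'' direction of Theorem~\ref{th1} every solution arises from such a pair, and from exactly one, since $D$ and the free entries are literally part of the matrix. Hence
\[\#\{A\in T_n(R): A^2-rA+sI=0\}=\sum_{D\in\{a,b\}^n} q^{\,f(D)},\qquad f(D):=\#\{(i,j):\, i<j,\ a_{ii}\neq a_{jj}\}.\]

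Second I would evaluate this sum by grouping diagonals according to type. Let $n_1$ be the number of coordinates of $D$ equal to $a$ and $n_2=n-n_1$ the number equal to $b$. The number of diagonals of a given type $(n_1,n_2)$ is the multinomial coefficient $\binom{n}{n_1,\,n_2}=n!/(n_1!\,n_2!)$ (the number of ways to choose which of the $n$ slots receive the value $a$), which is what is meant by $\binom{n}{n_1 n_2}$ in the statement. For any diagonal of this type, a pair $(i,j)$ counted by $f(D)$ consists of one $a$-slot and one $b$-slot, and each unordered choice of an $a$-slot together with a $b$-slot determines exactly one such ordered pair with $i<j$; equivalently $f(D)=\binom{n}{2}-\binom{n_1}{2}-\binom{n_2}{2}=n_1 n_2$. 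Therefore every diagonal of type $(n_1,n_2)$ contributes $q^{\,n_1 n_2}$, and summing over the type gives
\[\#\{A\in T_n(R): A^2-rA+sI=0\}=\sum_{\substack{n_1+n_2=n\\ 0\le n_i}}\binom{n}{n_1,\,n_2}\,q^{\,n_1 n_2},\]
which is the asserted formula.

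I do not anticipate a real obstacle here: all the substance lives in Theorem~\ref{th1}, and what remains is bookkeeping. The two points that deserve a line of care are (a) confirming that the entries forced by \eqref{eqth} are genuinely functions of the free entries and the diagonal alone, so that they never feed back as additional constraints — this is exactly the strict decrease of the width $j-i$ in the recursion — and (b) the elementary identity $f(D)=n_1 n_2$ relating the number of free positions to the composition of the diagonal.
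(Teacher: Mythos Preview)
Your proposal is correct and follows essentially the same route as the paper: invoke Theorem~\ref{th1} to see that a solution is determined by its diagonal together with the free off-diagonal entries, observe that for a diagonal of type $(n_1,n_2)$ the number of free positions equals $n_1 n_2$, count the diagonals of each type by the multinomial coefficient, and sum. Your write-up is in fact more careful than the paper's on two points---you justify explicitly why the recursion \eqref{eqth} is well posed (strict decrease of $j-i$) and why $a-b$ is invertible (non--zero-divisor in a finite ring), and you actually prove the identity $f(D)=n_1 n_2$ rather than asserting it---but the underlying argument is the same.
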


\section{Matrix solutions of the equation $A^2-r A+s I=0$}

We star our considerations we notice the following property.

\begin{remark}\label{lemma1} Assume that $R$ is an associative ring with identity $1$, $M=T_{\infty}(R)$ or $M=T_n(R)$ for some $n\in\mathbb{N}$. If $A\in M$ is a block matrix such that
$$A=\left[ \begin{array}{cccc}
B_{11} &B_{12}& B_{13}& \cdots \\ 
&B_{22}&B_{23} & \cdots\\
 && B_{33}& \cdots  \\
 &&&\ddots
\end{array}
\right]
$$
where $B_{ii}$ are square  matrices and $A$ satisfies the quadratic equation $A^2-r A+s I=0$, then for all $i$, the matrices $B_{ii}$ satisfy the quadratic equation as well.
\end{remark}
\begin{proof} Since $A$ satisfies the quadratic equation $A^2-r A+s I=0$, we have
$$
\begin{array}{rcl}
     A^2\!-\! rA\!+\! s I\!\!\! &\! =\!&\!\!\!\left[ \begin{array}{cccc}
B_{11}^{2}- r B_{11}+s I & *& *& \cdots \\ 
&B_{22}^{2}- rB_{22}+s I&* & \cdots\\
 && B_{33}^{2}- r B_{33}+s I& \cdots  \\
 &&&\ddots
\end{array}
\right] 
\end{array},
$$
and we obtain $B_{ii}^{2}-r B_{ii}+s I=0 $ for all $i$ by comparing entries of the diagonal position in the matrix equality above.
\end{proof}

Now, we can prove our first main result.


\begin{proof}[Proof of Theorem \ref{th1}] Let $A=\sum_{i,j}a_{ij}E_{ij}\in M$ be a matrix that satisfies the equation (\ref{eq1}). As we have that $a$ and $b$ are different roots of the quadratic equation then $r=a+b$ and $s=ab$.

Since $A^2-rA+sI=0$ our coefficients must satisfy the equations:

\begin{equation}\label{eq3}
\left\{\begin{array}{l}
a_{ii}^2-r\cdot a_{ii}+s=0,\\
\\
a_{ii}a_{i,i+i}+a_{i,i+1}a_{i+1,i+1}-r\cdot a_{i,i+1}=0,\\
\\
a_{ii}a_{i,i+2}+a_{i,i+1}a_{i+1,i+2}+a_{i,i+2}a_{i+2,i+2}-r\cdot 
a_{i,i+2}=0,\\
\\
\ \ \ \ \ \vdots\\
\\
\displaystyle\sum_{p=0}^{m}a_{i,i+p}a_{i+p,i+m}- r\cdot a_{i,i+m}=0,\\
\\
\ \ \ \ \ \vdots
\end{array}\right.
\end{equation}

Since $A$ satisfies the equation (\ref{eq1})  and $a_{ii}^2-r\cdot a_{ii}+s=0$ then $a_{ii}\in\{a,b\}$.

We need to proved that (ii) and (iii) given in Theorem \ref{th1} hold. We use induction on $j-i$.

Assume that $j-i=1$. We have
\begin{equation}\label{eq4}
a_{ii}a_{i,i+i}+a_{i,i+1}a_{i+1,i+1}-r\cdot a_{i,i+1}=0,
\end{equation}
from the family of equations (\ref{eq3}). One can see that:
\begin{itemize}
    \item If $a_{ii}=a_{i+1,i+1}$,  of the equation (\ref{eq4}) we have
    $$2a_{ii}a_{i,i+1}-r\cdot a_{i,i+1}=0,$$
or
$$a_{i,i+1}\left(2a_{ii}-r\right)=0,
$$
then $a_{i,i+1}=0$ since $a_{ii}\in\{a,b\}$ and $r=a+b$ with $a\not= b$.
   \item If $a_{ii}\not=a_{i+1,i+1},$ then of the equation (\ref{eq4}) we obtain
   $$a_{i,i+1}\left(a_{ii}+a_{i+1,i+1}-r\right)=0,
   $$
thus $a_{i,i+1}$ can be chosen arbitrarily, since $r=a+b=a_{ii}+a_{i+1,i+1}$. 
\end{itemize}
So the first super diagonal entries of the matrix $A$ fulfill (ii) and (iii).

Now, suposse that $j-i-=m>1$ and consider the $(i,i+m)$ entries of the equation (\ref{eq1}), and we have the $(m+1)$-st family of the equation (\ref{eq3}):

$$\displaystyle\sum_{p=0}^{m}a_{i,i+p}a_{i+p,i+m}- r\cdot a_{i,i+m}=0,
$$
or \begin{equation}\label{eq5}
a_{i,i+m}\left(a_{ii}+a_{i+m,i+m}-r\right)+\sum_{p=1}^{m-1}a_{i,i+p}a_{i+p,i+m}=0.
\end{equation}

\begin{itemize}
\item If $a_{ii}=a_{i+m,i+m}$ then $\left(a_{ii}+a_{i+m,i+m}-r\right)\not= 0$ and  we obtain that
\begin{equation}\label{eqth2}
\begin{array}{rcl}
a_{i,i+m}&=&-\displaystyle\frac{1}{(a_{ii}+a_{i+m,i+m}-r)}\displaystyle\sum_{p=1}^{m-1}a_{i,i+p}a_{i+p,i+m}\\
&&\\\
&=&-\displaystyle\frac{1}{(a_{ii}-\textit{other root})}\displaystyle\sum_{p=1}^{m-1}a_{i,i+p}a_{i+p,i+m}
\end{array}
\end{equation}
where $r=a+b$ and
$$
\left(a_{ii}+a_{i+m,i+m}-r\right)=\left(a_{ii}-\textit{other\ root}\right)=\left\{\begin{array}{cc}
  a-b   &, \ if \ \ a_{ii}=a_{i+m,i+m}=a  \\
  &\\
  b-a  & ,\  if \ \ a_{ii}=a_{i+m,i+m}=b
\end{array}\right.
$$

So (ii) of the Theorem \ref{th1} hold.

\item If $a_{ii}\not= a_{i+m,i+m}$ then we must have $a_{ii}+a_{i+m,i_m}-r=0$ since $a_{ii}\in\{a,b\}$ and $r=a+b$. So we get \begin{equation}\label{eqq}\displaystyle\sum_{p=1}^{m-1}a_{i,i+p}a_{i+p,i+m}=0
\end{equation}
from equation (\ref{eq5}).

Now, consider $A(m,i)$ the submatrix of $A$ defined as
$$
A(m,i)=\left[
\begin{array}{cccc}
a_{ii} & a_{i,i+1}& \cdots & a_{i,i+m}\\
  &  a_{i+1,i+1} & \cdots & a_{i+1,i+m}\\
  & & \ddots &\vdots\\
  &&&a_{i+m,i+m}
\end{array}
\right].
$$

From Remark (\ref{lemma1}) one can see that $A$ satisfies the quadratic equation (\ref{eq1})  if and only if $A(m,i)$ also satisfies the equation (\ref{eq1}) for all $m$ and $i$.

We write this matrix as a block matrix such that

\begin{equation}\label{eq6}
A(m,i)=\left[
\begin{array}{ccc}
a_{ii} & \alpha&  a_{i,i+m}\\
 0 &  \beta & \gamma\\
 0 &0&a_{i+m,i+m}
\end{array}
\right].
\end{equation}

Since $A$ satisfies the equation (\ref{eq1}) and by Remark (\ref{lemma1}) we have  that the matrices 
$$A(m-1,i)=\left[
\begin{array}{cc}
a_{ii} & \alpha\\
 0 &  \beta 
\end{array}
\right] \ \ \ and  \ \ \ A(m-1,i+1)=\left[
\begin{array}{cc}
    \beta & \gamma\\
 0 &a_{i+m,i+m}
\end{array}
\right],
$$ 
also satisfies the equation (\ref{eq1}). So, we obtain that
$$ a_{ii}\alpha+\alpha\beta - r \alpha=0
$$
and 
$$\beta\gamma+\gamma a_{_{i+m,i+m}}- r \gamma=0.
$$

Thus
$$
\begin{array}{rl}
   \left(A(m,i)\right)^2\!-\! r A(m,i)\!+\!s I \!\!\!&\!\!=\!\!\!\left[\!
\begin{array}{ccc}
0 \!&\!  0 &a_{ii}a_{i,i+m}+\alpha\gamma+a_{i,i+m}a_{i+m,i+m}-ra_{i,i+m} \\
0 \! &\!  0 & 0\\
 0&0&0
\end{array}
\!\right]
\end{array}
$$
since $a_{ii}\in\{a,b\}$ with $a, b$ roots of the equation $x^2-rx+s=0$.

As $a_{ii}\not= a_{i+m,i+m}$ from equations (\ref{eq5}) and (\ref{eqq}) we have
$$\alpha\gamma=\displaystyle\sum_{p=1}^{m-1}a_{i,i+p}a_{i+p,i+m}=0.
$$

Hence,
$$
\begin{array}{rcl}
a_{ii}a_{i,i+m}+\alpha\gamma+a_{i,i+m}a_{i+m,i+m}-ra_{i,i+m}\!\!&\!=\!&\!\!a_{i,i+m}(a_{ii}+a_{i+m,i+m}-r)+\alpha\gamma\\   
\!\!&\!=\!&\!\! \alpha\gamma\\
\!\!&\!=\!&\!\!0
\end{array}
$$
since $r=a+b=a_{ii}+a_{i+m,i+m}$.

Therefore, $A(m,i)$ satisfies the equation (\ref{eq1}), regardless of the value of the entry $a_{i,i+m}$.Thus (iii) of the Theorem \ref{th1} holds.
\end{itemize}

Assume now that the entries of $A$ fulfill (i), (ii) and (iii) of Theorem \ref{th1}. We shall prove that $A$ satisfies the quadratic equation $A^2-rA+sI=0$. Since the equation (\ref{eqth}) involves only the coefficients with indices  $p$, such that $i\leq p \leq j$, it suffices to prove the claim for $A(m,i)$. For $m=1$ and $m=2$ one can easly check now that all sub matrices $A(1,i)$ and $A(2,i)$ satisfy the quadratic equation $A^2-rA+sI=0$. Suppose that the claim hold for all $1\leq t \leq m-1$, i.e. $A(2,i), A(3,i),\ldots, A(m-1,i)$ satisfy the quadratic equation (\ref{eq1}) for all $i$, we need only prove that $A(m,i)$ also satisfy the quadratic equation (\ref{eq1}).

Consider $A(m,i)$ as a block matrix given in the form of equation (\ref{eq6}). Thus,  we have that the quadratic equation $A(m,i)^2-rA(m,i)+sI$ equals

\begin{equation}\label{eq7}
\left[\!
\begin{array}{ccc}
a_{ii}^2-ra_{ii}+s &  a_{ii}\alpha+\alpha\beta-r\alpha &a_{ii}a_{i,i+m}\!+\!\alpha\gamma+a_{i,i+m}a_{i+m,i+m}-ra_{i,i+m} \\
0  &  \beta^2-r\beta+s & \beta\gamma+\gamma a_{i+m,i+m}-r\gamma\\
 &&a_{i+m,i+m}^2-ra_{i+m,i+m}+s
\end{array}\!
\right]
\end{equation}

By assumption, $A(m-1,i)$ satisfy the equation (\ref{eq1}) for all $i$. So $A(m-1,i)$ and $A(m-1,i+1)$ satisfy the equation (\ref{eq1}). Thus,

\begin{equation}\label{eq9}
\begin{array}{rcl}
A(m-1,i)^2-rA(m-1,i)+sI&=&\left[
\begin{array}{cc}
a_{ii}^2-ra_{ii}+s  &  a_{ii}\alpha+\alpha\beta-r\alpha\\
 0 & \beta^2-r\beta+s 
\end{array}
\right]\\
&&\\
&=&\left[
\begin{array}{cc}
    0 & 0\\
 0 &0
\end{array}
\right],
\end{array}
\end{equation} 
and
\begin{equation}\label{eq10}
\begin{array}{rcl}
A(m\!-\!1,i\!+\!1)^2\!-\!rA(m\!-\!1,i\!+\!1\!)\!+\!sI\!\!\!\!&\!=\!&\!\!\!\!\left[\!\!
\begin{array}{cc}
\beta^2-r\beta+s \!\! &\!\!  \beta\gamma+a_{i+m,i+m}\gamma-r\gamma\\
 0\!\! &\!\! a_{i+m,i+m}^2-ra_{i+m,i+m}+s 
\end{array}
\!\!\right]\\
&&\\
\!\!\!\!&\!=\!&\!\!\!\!\left[
\begin{array}{cc}
    0 & 0\\
 0 &0
\end{array}
\right].
\end{array}
\end{equation} 

From the equations (\ref{eq9}) and (\ref{eq10}) above, we have $a_{ii}^2-ra_{ii}+s=0$ and $a_{i+m,i+m}^2-ra_{i+m,i+m}+s=0 $ since $a_{ii}\in \{a,b\}$ is root the equation $x^2-rx+s=0$,  $\beta^2-r\beta+sI=0$ by Lemma (\ref{lemma1}) and

\begin{equation}\label{eq11}
    a_{ii}\alpha+\alpha\beta-r\alpha=0 
\end{equation}

\begin{equation}\label{eq12}
    \beta\gamma+a_{i+m,i+m}\gamma-r\gamma=0
\end{equation}

Then by multiplying the equation (\ref{eq11})  by $\gamma$ and the equation (\ref{eq12}) by $\alpha$ we obtain that
$$
a_{ii}\alpha\gamma+\alpha\beta\gamma-r\alpha\gamma=0 
$$
$$
\alpha\beta\gamma+a_{i+m,i+m}\alpha\gamma-r\alpha\gamma=0.
$$
Hence,
$$\alpha\beta\gamma=(r-a_{ii})\alpha\gamma
$$
$$\alpha\beta\gamma=(r-a_{i+m,i+m})\alpha\gamma.
$$

\begin{itemize}
\item If we consider $a_{ii}\not=a_{i+m,i+m}$ we have
$$\alpha\beta\gamma=(a_{ii})\alpha\gamma=(a_{i+m,i+m})\alpha\gamma,
$$
since $r=a_{ii}+a_{i+m,i+m}$, which implies that $\alpha\gamma=0$.

So, the $(1,m)$ entries of equation (\ref{eq7}) is
$$a_{i,i+m}\left(a_{ii}+a_{i+m,i+m}-r\right)+\alpha\gamma = \alpha\gamma=0.$$

Therefore, $A(m,i)$ satisfies the quadratic equation $A^2-rA+sI=0$.

\item On the other hand, if $a_{ii}=a_{i+m,i+m}$ from (iii) of the  Theorem \ref{th1} or the equations (\ref{eq5}) and  (\ref{eqth2}) we have
$$
a_{i,i+m}=-\displaystyle\frac{1}{(a_{ii}+a_{i+m,i+m}-r)}\displaystyle\sum_{p=1}^{m-1}a_{i,i+p}a_{i+p,i+m}=-\displaystyle\frac{1}{(a_{ii}+a_{i+m,i+m}-r)}\alpha\gamma,
$$
so, in this case the $(1,m)$ entries of equation (\ref{eq7}) is
$$
\begin{array}{rcl}
a_{i,i+m}\left(a_{ii}\!+\!a_{i+m,i+m}\!-\!r\right)\!+\!\alpha\gamma\!\!\!\!&\!=\!&\!\!\!\!\left(-\displaystyle\frac{\alpha\gamma}{a_{ii}\!+\!a_{i+m,i+m}\!-\!r}\ \right)\left(a_{ii}\!+\!a_{i+m,i+m}\!-\!r\right)\!+\!\alpha\gamma\\
&&\\
\!\!\!\!&\!=\!&\!\!\!\!0.
\end{array}
$$

Therefore, $A(m,i)$ also satisfies the quadratic equation (\ref{eq1})
\end{itemize}

Thus, we have proved that $A$ satisfies the quadratic equation (\ref{eq1}) in the upper triangular matrix ring $M$ where $a_{ii}\in\{a,b\}$ and $a, b$ are different roots of the equation $x^2-rx+s=0$ if and only if $A$ is described as in (i), (ii) and (iii) of the Theorem \ref{th1}
\end{proof}

Follows immediately from Theorem \ref{th1} the results of Hou \cite{Hou} and Slowik \cite{Slowik}: 

\begin{corollary}[Hou \cite{Hou}] 
We can construct any $n\times n$ idempotent upper triangular matrix over $R$ that has only zeros and ones on its diagonal
\begin{itemize}
    \item[(i)] For all $i$, the entries in the main diagonal  $a_{ii}\in \{0,1\}$.
    \item[(ii)] For $i<j$, if  $a_{ii} = a_{jj}$, then $a_{ij}$ equals to
\begin{equation}\label{eqth3}
a_{ij}=\left\{\begin{array}{lr} 
0\ \ & \text{if}\ \ j=i+1\\
 (1-2a_{ii})\displaystyle\sum_{p=i+1}^{j-1}a_{ip}a_{pj} 
&\text{if}\  \ j > i +1. 
 \end{array}\right.
\end{equation}
\item[(iii)] For $i < j$, if $a_{ii} \not = a_{jj}$, then $a_{ij}$ can be chosen arbitrarily.
\end{itemize}

\end{corollary}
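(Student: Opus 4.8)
The plan is to read off this corollary as the special case of Theorem~\ref{th1} attached to the polynomial $x^2-x=0$. Here $r=1$ and $s=0$, and the two roots are $a=0$ and $b=1$; they are distinct, and $a-b=-1$ is a unit of $R$ (because $R$ has identity), hence in particular not a right zero divisor, so the hypotheses of Theorem~\ref{th1} are satisfied. Since a matrix $A$ is idempotent precisely when $A^2-1\cdot A+0\cdot I=0$, Theorem~\ref{th1} applies verbatim and already gives a complete description of the idempotent upper triangular matrices whose diagonal entries lie in $\{0,1\}$; it only remains to match notation.

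First I would note that part~(i) of Theorem~\ref{th1} specializes to $a_{ii}\in\{0,1\}$, which is part~(i) of the corollary, and that part~(iii) of Theorem~\ref{th1} is word for word part~(iii) here. The only computation needed is for part~(ii). Theorem~\ref{th1} gives $a_{ij}=0$ when $j=i+1$ and $a_{ii}=a_{jj}$, and for $j>i+1$ it gives
\[
a_{ij}=-\frac{1}{a_{ii}-\text{other root}}\sum_{p=i+1}^{j-1}a_{ip}a_{pj}.
\]
I would then check the two possible values of $a_{ii}$: if $a_{ii}=0$ the other root is $1$, so the scalar is $-1/(0-1)=1=1-2\cdot 0$; if $a_{ii}=1$ the other root is $0$, so the scalar is $-1/(1-0)=-1=1-2\cdot 1$. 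In either case the coefficient equals $1-2a_{ii}$, which turns the formula of Theorem~\ref{th1} into (\ref{eqth3}), proving part~(ii).

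Conversely, a matrix whose entries satisfy (i)--(iii) of the corollary satisfies, by the same two-case identity $1-2a_{ii}=-1/(a_{ii}-\text{other root})$, the conditions (i)--(iii) of Theorem~\ref{th1} with $r=1$ and $s=0$, and therefore fulfills $A^2-A=0$. This yields the stated equivalence. There is essentially no obstacle beyond this bookkeeping; the one point worth spelling out is that $-1\in R$ is invertible, so the division appearing in Theorem~\ref{th1} is legitimate in this setting. (The analogous corollary of Slowik for involutions is obtained in the same way from $x^2-1=0$, with roots $1$ and $-1$, provided $2=1+1$ is not a right zero divisor.)
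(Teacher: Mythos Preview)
Your proof is correct and follows the same approach as the paper: specialize Theorem~\ref{th1} to the polynomial $x^2-x$ and check that the scalar $-1/(a_{ii}-\text{other root})$ simplifies to $1-2a_{ii}$ when $a_{ii}\in\{0,1\}$. If anything, you are more explicit than the paper in verifying the hypothesis that $a-b=-1$ is not a right zero divisor and in separating the two cases $a_{ii}=0$ and $a_{ii}=1$.
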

\begin{proof} For $a_{ii}\in\{0,1\}$ the quadratic equation (\ref{eq1}) equals to $A^2=A$ then $A$ is a idempotent matrix.

We need to verify that equation (\ref{eqth3}) of the Theorem \ref{th1} yields the same possibilities for $a_{ij}$ shown in the procedure above. The equation (\ref{eqth}) becomes

$$
\begin{array}{rcl}
a_{ij}&=&-\displaystyle\frac{1}{a_{ii}-{\textit{another  root}}}\displaystyle\sum_{p=i+1}^{j-1}a_{ip}a_{pj} \\
&=& (1-2a_{ii} )\displaystyle\sum_{p=i+1}^{j-1}a_{ip}a_{pj} 
\end{array}
$$
for $a_{ii}\in\{0,1\}$.

\end{proof}
\begin{corollary}[Slowik \cite{Slowik}] We can construct any $n\times n$ involution upper triangular matrix over $R$ wher $a_{ii}\in\{1,-1\}$
\begin{itemize}
    \item[(i)] For all $i$, the entries in the main diagonal  $a_{ii}\in \{-1,1\}$.
    \item[(ii)] For $i<j$, if  $a_{ii} = a_{jj}$, then $a_{ij}$ equals to
\begin{equation}\label{eqth4}
a_{ij}=\left\{\begin{array}{lr} 
0\ \ & \text{if}\ \ j=i+1\\
 -(2a_{ii})^{-1}\displaystyle\sum_{p=i+1}^{j-1}a_{ip}a_{pj} 
&\text{if}\  \ j > i +1. 
 \end{array}\right.
\end{equation}
\item[(iii)] For $i < j$, if $a_{ii}  =- a_{jj}$, then $a_{ij}$ can be chosen arbitrarily.
\end{itemize}

\end{corollary}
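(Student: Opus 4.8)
The plan is to obtain this corollary as a direct specialization of Theorem~\ref{th1}. An involution is a matrix with $A^2 = I$, which is precisely equation~(\ref{eq1}) with $r = 0$ and $s = -1$; the associated polynomial $x^2-1$ factors as $(x-1)(x+1)$, so it has the two distinct roots $a = 1$ and $b = -1$, with $a - b = 2$. The standing hypothesis of Theorem~\ref{th1} that $a-b$ not be a right zero divisor is exactly the assumption (that $2$ is not a right zero divisor) under which Slowik works, so Theorem~\ref{th1} applies verbatim with these values of $r,s,a,b$, in both directions of the equivalence; it then remains only to translate statements~(i)--(iii) of that theorem into the present setting.

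First I would read off part~(i): it gives $a_{ii} \in \{a,b\} = \{1,-1\}$, which is item~(i) of the corollary, and moreover $r = a+b = 0$, $s = ab = -1$, consistent with the equation $A^2 = I$. Next I would treat part~(ii). When $a_{ii} = a_{jj}$, the ``other root'' appearing in~(\ref{eqth}) is $-a_{ii}$, since the two roots are $1$ and $-1$; hence
\[
a_{ii} - (\text{other root}) = a_{ii} - (-a_{ii}) = 2a_{ii},
\]
which is a unit because $2$ is not a right zero divisor and $a_{ii} \in \{1,-1\}$ is invertible. Substituting this into~(\ref{eqth}), the coefficient $-\frac{1}{a_{ii} - \text{other root}}$ becomes $-(2a_{ii})^{-1}$, which is precisely formula~(\ref{eqth4}); the case $j = i+1$, in which $a_{ij} = 0$, carries over unchanged.

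Finally, for part~(iii): since $a_{ii}$ and $a_{jj}$ both lie in $\{1,-1\}$, the inequality $a_{ii} \neq a_{jj}$ is equivalent to $a_{ii} = -a_{jj}$, so item~(iii) of Theorem~\ref{th1} becomes item~(iii) of the corollary and $a_{ij}$ may be chosen arbitrarily in that case. I do not expect any genuine obstacle here: the only points requiring care are the elementary identity $a_{ii} - (\text{other root}) = 2a_{ii}$ for $a_{ii} \in \{1,-1\}$ and the observation that the distinctness of the roots $\{1,-1\}$ coincides with the hypothesis that $2$ is not a right zero divisor, while all the real content is already supplied by Theorem~\ref{th1}.
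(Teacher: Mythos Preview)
Your proposal is correct and follows essentially the same approach as the paper: both specialize Theorem~\ref{th1} to the involution case $r=0$, $s=-1$ with roots $\{1,-1\}$, and verify that formula~(\ref{eqth}) reduces to~(\ref{eqth4}) via the identity $a_{ii}-(-a_{ii})=2a_{ii}$. Your treatment is in fact slightly more thorough, since you also spell out items~(i) and~(iii) and the hypothesis on $2$, whereas the paper's proof only writes out the reduction for item~(ii).
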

\begin{proof} For $a_{ii}\in\{-1,1\}$ the quadratic equation (\ref{eq1}) equals to $A^2=I$ then $A$ is a Involution matrix.

We need to verify that equation (\ref{eqth4}) of the Theorem \ref{th1} yields the same possibilities for $a_{ij}$ shown in the procedure above. The equation (\ref{eqth}) becomes

$$
\begin{array}{rcl}
a_{ij}&=&-\displaystyle\frac{1}{a_{ii}-{\text{another \ root}}}\displaystyle\sum_{p=i+1}^{j-1}a_{ip}a_{pj} \\
&=& -\displaystyle\frac{1}{a_{ii}-(-a_{ii})}\displaystyle\sum_{p=i+1}^{j-1}a_{ip}a_{pj}\\
&=& -\displaystyle\frac{1}{2a_{ii}}\displaystyle\sum_{p=i+1}^{j-1}a_{ip}a_{pj}
\end{array}
$$
for $a_{ii}\in\{-1,1\}$.

\end{proof}
\section{Compute the number of all solutions for the quadratic polynomial equation}

\begin{theorem} Let $R$ be an associative ring with identity 1 and $|R|=q$ the number of the elements
in $R$. Then the total number of $ n\times n$ upper triangular that satisfy the quadratic equaton $A^2-rA+sI=0$  with $a_{ii}\in\{a,b\}$ on
the diagonal where $\{a,b\}$ different roots of the quadratic equation $x^2-rx-s=0$ is equal to
$$ \displaystyle \sum_{\substack{n_1+n_1=n \\ 0\leq n_i}} \binom{n}{n_1n_2}\cdot q^{n_1n_2}.
$$
where $n_1, n_2$ are the number of times  that appears $a,b$ in the diagonal respectively, $r=a+b$ and $s=ab$.
\end{theorem}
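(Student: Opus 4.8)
The plan is to turn Theorem \ref{th1} into a counting statement by exhibiting an explicit bijection between solutions and combinatorial data. First, note that by part (i) of Theorem \ref{th1} every $A\in T_n(R)$ with $A^2-rA+sI=0$ automatically has $a_{ii}\in\{a,b\}$ for all $i$, so the diagonal condition in the statement is in fact no restriction at all. By parts (ii) and (iii), such an $A$ is determined by exactly two independent pieces of data: the diagonal vector $(a_{11},\dots,a_{nn})\in\{a,b\}^n$, and, for each pair $i<j$ with $a_{ii}\neq a_{jj}$, an arbitrary element $a_{ij}\in R$. All of the remaining superdiagonal entries --- those $a_{ij}$ with $i<j$ and $a_{ii}=a_{jj}$ --- are then uniquely prescribed by formula (\ref{eqth}); since that formula expresses $a_{ij}$ through the entries $a_{ip},a_{pj}$ with $i<p<j$, the recursion on $j-i$ has a unique solution, and Theorem \ref{th1} guarantees that the matrix so obtained is a solution and that every solution arises in precisely this way. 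Thus the assignment $(\text{data})\mapsto A$ is a bijection: injectivity because the diagonal and the free entries can simply be read off from $A$, surjectivity because it is the ``only if'' direction of Theorem \ref{th1}.

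Next I would stratify by diagonal type. Fix nonnegative integers $n_1,n_2$ with $n_1+n_2=n$ and count the solutions whose diagonal has $a$ in exactly $n_1$ positions and $b$ in the remaining $n_2$. Choosing which $n_1$ of the $n$ diagonal slots carry $a$ can be done in $\binom{n}{n_1}$ ways, and distinct choices yield distinct diagonals because $a\neq b$. For a fixed such diagonal, the pairs $i<j$ with $a_{ii}\neq a_{jj}$ are in bijection with the two-element subsets of $\{1,\dots,n\}$ consisting of one $a$-slot and one $b$-slot, of which there are $n_1 n_2$; each of these $n_1 n_2$ entries ranges freely over $R$, contributing a factor $q$ apiece, while the remaining entries are forced and contribute the factor $1$. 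Hence the number of solutions of diagonal type $(n_1,n_2)$ is $\binom{n}{n_1}\,q^{\,n_1 n_2}$.

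Summing over all admissible pairs $(n_1,n_2)$ with $n_1+n_2=n$ and $n_i\ge 0$ then yields the total
$$\sum_{\substack{n_1+n_2=n\\ 0\le n_i}} \binom{n}{n_1}\, q^{\,n_1 n_2},$$
which is the asserted formula, with the symbol $\binom{n}{n_1 n_2}$ read as $\binom{n}{n_1}=\binom{n}{n_2}$.

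The argument is essentially bookkeeping once Theorem \ref{th1} is available; the one point that deserves care --- the main, though mild, obstacle --- is confirming that the parametrization is genuinely free and genuinely complete: that the forced entries in (\ref{eqth}) are well defined (this is exactly where the hypothesis that $a-b$ is not a right zero divisor enters, guaranteeing that the division in (\ref{eqth}) makes sense), that they impose no further constraints on the free entries, and that no solution gets counted twice. Once these are settled, the displayed sum follows immediately.
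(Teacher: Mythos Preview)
Your proof is correct and follows essentially the same approach as the paper: invoke Theorem \ref{th1} to see that a solution is determined by its diagonal vector in $\{a,b\}^n$ together with the free entries $a_{ij}$ at positions where $a_{ii}\neq a_{jj}$, then stratify by the diagonal type $(n_1,n_2)$, count $\binom{n}{n_1}$ diagonals and $q^{n_1 n_2}$ choices for the $n_1 n_2$ free entries, and sum. Your write-up is in fact a bit more careful than the paper's in making the bijection explicit and in flagging where the invertibility of $a-b$ is used, but the argument is the same.
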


\begin{proof}[Proof of Theorem \ref{th2}] By Theorem \ref{th1}, the number of possible upper triangular matrices that satisfy the quadratic equation $A^2-rA+sI=0$ with the set $D=\{a,b\}$, $a\not= b$ on  the diagonal  depends entirely on which pairs of diagonal entries have $a_{ii}\not=a_{jj}$. To enumerate those possibilities, consider an integer column vector $d=(d_1,d_2,\ldots,d_n)$ the respective diagonal having each $d_i\in D$ and denote for $n_1,n_2$ the numbers of $a,b$ that appears in the diagonal respectively, such that $n_1+n_2=n$ with $0\leq n_i$ for $i=1,2$. By $\Delta$ we denote the number de pairs $(d_i,d_j)$ with $i<j$ and $d_i\not= d_j$. Notice that
$$\Delta = n_1\cdot n_2.$$
In particular, $\Delta$ is independent of the order in which the elements of the set $D$ appear on $d$. Consequently we have on the diagonal  yields
$q^{\Delta} = q^{n_1\cdot n_2},$
possible upper tiangular matrices that satisfy the quadratic equation $A^2-rA+sI=0$.

Finally, all $d_i's$ can be put on our main diagonal on
$$\binom{n}{n_1}\binom{n-n_1}{n_2}=\binom{n}{n_1,n_2},$$
where
$$\binom{n}{n_1,n_2}=\frac{n!}{n_1! n_2!}.
$$

Therefore, the total number of $n\times n$ upper tiangular matrices that satisfy the quadratic equation $A^2-rA+sI=0$  with elements the  set $\{a,b\}$ with $a\not=b$ on the diagonal is

$$ \displaystyle \sum_{\substack{n_1+n_1=n \\ 0\leq n_i}} \binom{n}{n_1n_2}\cdot q^{n_1n_2}.
$$

\end{proof}

\end{document}